\numberwithin{equation}{section}
\newtheorem{thm}{Theorem}[section]
\newtheorem{cor}[thm]{Corollary}
\newtheorem{con}[thm]{Convention}
\newtheorem{lem}[thm]{Lemma}
\newtheorem{prop}[thm]{Proposition}
\newtheorem{defn}[thm]{Definition}
\newtheorem{exam}[thm]{Example}
\newcommand{\Ann}{\operatorname{Ann}\,}
\newcommand{\coker}{\operatorname{Coker}\,}
\newcommand{\Hom}{\operatorname{Hom}\,}
\newcommand{\Spec}{\operatorname{Spec}\,}
\newcommand{\Max}{\operatorname{Max}\,}
\newcommand{\Ass}{\operatorname{Ass}\,}
\newcommand{\Assh}{\operatorname{Assh}\,}
\newcommand{\Att}{\operatorname{Att}\,}
\newcommand{\Supp}{\operatorname{Supp}\,}
\newcommand{\grad}{\operatorname{grade}\,}
\newcommand{\depth}{\operatorname{depth}\,}
\renewcommand{\dim}{\operatorname{dim}\,}
\newcommand{\cd}{\operatorname{cd}\,}
\newcommand{\Tr}{\operatorname{Tr}\,}
\newcommand{\Min}{\operatorname{Min}\,}
\newcommand{\h}{\operatorname{ht}\,}
\newcommand{\fa}{\mathfrak{a}}
\newcommand{\fb}{\mathfrak{b}}
\newcommand{\fm}{\mathfrak{m}}
\newcommand{\fp}{\mathfrak{p}}
\newcommand{\fq}{\mathfrak{q}}
\newcommand{\fx}{\mathfrak{x}}
\begin{document}
\bibliographystyle{amsplain}


\title[Characterization of some special rings via Linkage]
 {Characterization of some special rings via Linkage}

\bibliographystyle{amsplain}

     \author[M. jahangiri]{Maryam jahangiri$^1$}
     \author[kh. sayyari]{khadijeh sayyari$^2$}

\address{$^{1, 2}$ Faculty of Mathematical Sciences and Computer,
Kharazmi University, Tehran, Iran.}

\keywords{Linkage of ideals,  Cohen-Macaulay modules, local cohomology,  Special rings}

 \subjclass[2010]{13C40, 13C14, 13D45, 13H10.}


\begin{abstract}
Some descriptions of linked ideals in a commutative Notherian ring $R$ are provided in terms of the Associated prime ideals of $R.$ Then, among other things, we make some characterization of Cohen-Macaulay, Gorenstein and regular local rings in terms of their linked ideals.
\end{abstract}

\maketitle

\bibliographystyle{amsplain}
\section{introduction}

 The theory of linkage is an important topic in commutative algebra and Algebraic Geometry. It refers to Halphen (1870) and M. Noether \cite{No}(1882) who worked to classify space curves. In 1974 the significant work of Peskine and Szpiro \cite{PS} stated this theory in the modern algebraic language; two proper ideals $\fa$ and $\fb$ in a ring $R$ is said to be linked if there is an regular sequence $\underline{\fx}$ in their intersection such that $\fa = (\underline{\fx}) :_R \fb$ and $\fb = (\underline{\fx}) :_R \fa$.

In a resent paper \cite{JS}, inspired by the works in the ideal case, the authors present the concept of the linkage of ideals with respect to a module. Let $R$ be a commutative Noetherian ring with $1\neq 0$ and $M$ be a finitely generated $R$-module.
Let $\fa$, $\fb$ and $I$ be ideals of $R$ with  $I\subseteq \fa \cap \fb$  such that $I$ is generated by an $M$-regular sequence, $M\neq \fa M$ and $M\neq \fb M$. Then, $\fa$ and $\fb$ are said to be linked by   $I$ with respect to $M$ if $\fb M = IM:_M\fa$ and $\fa M = IM:_M\fb $. This is a generalization of the classical concept of linkage when $M= R$.

One of the main problems in this subject is to determine when the ideal $\fa$ of $R$ is a linked ideal. In this paper, first we consider the above generalization and define the set $S_{(I;M)}$, which contains the set of linked radical ideals of $R$ with respect to $M$ by $I.$ In Section 2, we study some of the basic properties of this set and, using them, we characterize the radical linked ideals. Indeed, among other things, we demonstrate that the radical ideal $\fa$ of $R$ is a linked ideal if and only if there exists an $R$-regular sequence $\underline{\fx}$ of length $\grad \fa$ in $\fa$ and $\Lambda \subseteq \Ass \frac{R}{\fx}$ such that $\fa = \bigcap_{\fp \in \Lambda} \fp$ (Corollary \ref{t6}). Then, we show that, in a Cohen-Macaulay local ring $R$, the radical ideal $\fa$ is a linked ideal if and only if $\fa$ is unmixed (Corollary \ref{c3}).

In the theory of local cohomology modules one of the main problems is the computing of the annihilator of these modules. As an application of linkage theory, we study the annihilator of local cohomology modules in some special cases, too (Proposition \ref{p1} and Example \ref{ex2}).

In Section 3, using the results provided in Section 2, we characterize Cohen-Macaulay, Gorenstein and regular local rings in terms of their linked ideals (see theorems \ref{t10}, \ref{t13} and \ref{t12}).

Throughout the paper, $R$ denotes a non-trivial commutative Noetherian ring, $\fa$ and $\fb$ are non-zero proper ideals of $R$ and $M$ will denote a finitely generated $R$-module.


\section{linked ideals with respect to a module}


In this section, first, we study some basic properties of the linked ideals with respect to a module and provide some characterization of them. Then, using this characterization, we make a description of linked ideals in $R.$

\begin{defn}\label{F1}
 \emph{Assume that $\fa M\neq M$ and $\fb M\neq M$ and let $I\subseteq \fa \cap \fb$ be an ideal which is generated by an $M$-regular sequence. Then we say that the ideals $\fa$ and $\fb$ are linked by $I$ with respect to $M$, denoted by $\fa\sim_{(I;M)}\fb$, if $\fb M = IM:_M\fa$ and $\fa M = IM:_M\fb $. Also, the ideals $\fa$ and $\fb$ are said to be geometrically linked by $I$ with respect to $M$ if $\fa M \cap \fb M = IM$. The ideal $\fa$ is $M$-selflinked by $I$ if $\fa\sim_{(I;M)}\fa$.}
\end{defn}

Note that in the case where $M = R$, this concept is the classical concept of linkage of ideals in \cite{PS}.

In the following, we contract the set of ideals $S_{(I;M)}$ which plays a fundamental role in the paper. This set, as we show, also contains of all linked radical ideals by $I$ with respect to $M$.

\begin{con}\label{F2}
 Assume that $I$ is an ideal of $R$ which is generated by an $M$-regular sequence. Set $$ S_{(I;M)} : = \{ \fa \triangleleft R | I \subsetneqq \fa,  \fa = IM:_R IM:_M \fa  \}.$$

 Note that $S_{(I;R)}$ actually contains of all linked ideals by $I$.
\end{con}

 Some basic properties of the set $S_{(I;M)}$ are presented in the following Lemma.
\begin{lem}\label{l13}
Let $I$ be an ideal of $R$ which is generated by an $M$-regular sequence. Then
\begin{itemize}
   \item [(i)]  $\Ass \frac{R}{\fa}\subseteq \Ass \frac{M}{IM}$, for all $\fa \in S_{(I;M)}$.
   \item [(ii)]  $\Ass \frac{M}{IM}-\{I\} =\Spec S_{(I;M)}$, the set of prime ideals of $S_{(I;M)}$.
   \item [(iii)] $S_{(I;M)}$ is close under finite intersection. More precisely, $\fa_1 \cap \fa_2 \in S_{(I;M)}$, for all $\fa_1,\fa_2 \in S_{(I;M)}$ with $\fa_1 \cap \fa_2 \neq I$.
   \item [(iv)]  $S_{(I;M)}$ is close under radical. In particular, if $\fa \in S_{(I;M)}$ then $\sqrt{\fa}= \bigcap_{\fp \in \Lambda} \fp\in S_{(I;M)}$ for some $\Lambda \subseteq \Ass \frac{M}{IM}- \{I\}$.
   \item [(v)]  $\sqrt{\fa + \Ann M} \in S_{(I;M)}$, for all ideals $\fa$ of $R$ which are linked by $I$ with respect to $M$.
   \end{itemize}
\end{lem}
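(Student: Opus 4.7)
The plan is to prove the five items in the order given, exploiting the structural remark that the defining equation $\fa=IM:_R(IM:_M\fa)$ reads as $\fa=\Ann_R\bigl((IM:_M\fa)/IM\bigr)$, so that $R/\fa$ sits inside a power of $M/IM$.

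For (i), I would choose generators $\bar n_1,\dots,\bar n_k$ of $(IM:_M\fa)/IM$ and consider the map $R\to\bigl((IM:_M\fa)/IM\bigr)^k$ sending $r\mapsto(r\bar n_1,\dots,r\bar n_k)$; its kernel is exactly $\Ann_R\bigl((IM:_M\fa)/IM\bigr)=\fa$, so $R/\fa$ embeds into $(M/IM)^k$, and $\Ass(R/\fa)\subseteq\Ass(M/IM)$ follows. For (ii), the containment $\Spec S_{(I;M)}\subseteq\Ass(M/IM)-\{I\}$ is immediate from (i), since $\Ass(R/\fp)=\{\fp\}$ and $I\subsetneq\fp$ holds by definition. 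Conversely, given $\fp\in\Ass(M/IM)-\{I\}$, I would pick $m\in M\setminus IM$ with $\fp=IM:_R m$; then $m\in IM:_M\fp$, and any $r\in IM:_R(IM:_M\fp)$ satisfies $rm\in IM$, forcing $r\in\fp$. The reverse inclusion is automatic, and $I\subsetneq\fp$ follows from $I\subseteq\Ann(M/IM)\subseteq\fp$ together with $\fp\ne I$.

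For (iii), from $\fa_1\cap\fa_2\subseteq\fa_i$ we get $IM:_M\fa_i\subseteq IM:_M(\fa_1\cap\fa_2)$, so any $r\in IM:_R(IM:_M(\fa_1\cap\fa_2))$ lies in $IM:_R(IM:_M\fa_i)=\fa_i$ for each $i$; the reverse inclusion is automatic. For (iv), by (i) every minimal prime of $\fa$ lies in $\Ass(M/IM)$ and contains $\fa\supsetneq I$, so it belongs to $\Ass(M/IM)-\{I\}$ and hence to $S_{(I;M)}$ by (ii). Since $\sqrt{\fa}$ is the finite intersection of these primes and is strictly larger than $I$, iterating (iii) puts $\sqrt{\fa}$ in $S_{(I;M)}$.

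For (v), the linkage identities $\fa M=IM:_M\fb$ and $\fb M=IM:_M\fa$ produce the isomorphism
\[
M/\fa M \;\cong\; (M/IM)/(0:_{M/IM}\fb),
\]
and multiplication by a finite set of generators of $\fb$ embeds the right-hand side into a power of $M/IM$, so $\Ass(M/\fa M)\subseteq\Ass(M/IM)$. The standard determinant-trick identity $\sqrt{\fa+\Ann M}=\sqrt{\Ann_R(M/\fa M)}$ then expresses $\sqrt{\fa+\Ann M}$ as the intersection of the minimal associated primes of $M/\fa M$, each of which lies in $\Ass(M/IM)-\{I\}$, and the claim follows from (ii) and (iii). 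The main obstacle I anticipate is in (v), specifically verifying that $I$ does not appear among these minimal primes, i.e.\ that $\sqrt{\fa+\Ann M}\supsetneq I$; this is where the nontriviality of the linkage ($\fa M,\fb M\ne M$) must be invoked to exclude the degenerate situation.
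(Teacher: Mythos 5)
Your proof is correct. For parts (i)--(iv) you follow essentially the paper's own route: the paper first reduces to $I=0$ and embeds $R/(0:_RN)$ into $\bigoplus_{i}R\alpha_i$ for generators $\alpha_i$ of $N=0:_M\fa$, which is exactly your embedding of $R/\fa$ into a finite power of $M/IM$; the two inclusions in (ii), the colon-ideal computation in (iii), and the reduction of (iv) to (ii) and (iii) via minimal primes are all the same. The genuine divergence is in (v). The paper invokes \cite[3.7(ii)]{JS} for the identity $\sqrt{\fa+\Ann M}=\sqrt{0:_R(0:_M\fa)}$ and then uses that $0:_M\fa$ is a submodule of $M$, so that $\Min(\fa+\Ann M)=\Min\Ass(0:_M\fa)\subseteq\Ass M$; that is, it works with the submodule $IM:_M\fa=\fb M$. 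You instead use the other linkage equation $\fa M=IM:_M\fb$ to embed $M/\fa M$ into a finite power of $M/IM$, and you replace the cited identity by the elementary fact $\sqrt{\Ann_R(M/\fa M)}=\sqrt{\fa+\Ann M}$ (Cayley--Hamilton, or comparison of supports). Both routes are valid; yours is more self-contained, since it avoids the external reference and in passing reproves the containment $\Ass(M/\fa M)\subseteq\Ass(M/IM)$ that the paper elsewhere cites as \cite[2.7]{JS}. The point you flag as a possible obstacle --- that $\sqrt{\fa+\Ann M}$ and the minimal primes occurring in its decomposition differ from $I$ --- resolves exactly as you indicate: if $\fa=I$ then $IM:_M\fa=M$, so $\fb M=M$, contradicting the definition of linkage; hence $\fa\supsetneq I$, and every prime containing $\fa$ is therefore distinct from $I$. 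Write that sentence out explicitly and the argument is complete; there is no gap.
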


\begin{proof}
Note that it is enough to consider the case where $I =0$.

\begin{itemize}
   \item [(i)]  Let $\fa \in S_{(0;M)}$, $N:= 0:_M \fa $ and assume that $N =  \Sigma^t_{i=1}R\alpha_i$ for some $\alpha_1,...,\alpha_t\in N.$ Then, by the assumption, $\fa= 0:_RN$ and using the natural monomorphism $\frac{R}{0:_R N}\rightarrow \oplus^t_{i=1} R\alpha_i$, we get $$\Ass \frac{R}{\fa} \subseteq \bigcup^t_{i=1} \Ass R\alpha_i \subseteq \Ass M.$$
   \item [(ii)]  Let $\alpha \in M$ such that $\fp = 0:_R \alpha \in \Ass M-\{0\}$. Hence, $$\fp = 0:_R\alpha = 0:_R 0:_M \fp\in S_{(I;M)}.$$ The converse follows from (i).
   \item [(iii)]  It follows from the fact that $$\fa_1 \cap \fa_2 \subseteq 0:_R 0:_M (\fa_1 \cap \fa_2) \subseteq (0:_R 0:_M \fa_1)\cap (0:_R 0:_M \fa_2).$$
   \item [(iv)]  First note that $\sqrt{\fa}$ is a non-zero ideal. Hence $\sqrt{\fa} = \bigcap_{\fp \in \Min \fa} \fp \in S_{(0;M)}$, by (iii) and (ii).
   \item [(v)]  By \cite[3.7(ii)]{JS}, $\sqrt{\fa + \Ann M} = \sqrt{0:_R 0:_M \fa}$. Therefore $$\Min (\fa + \Ann M) = \Min \Ass (0:_M \fa) \subseteq \Ass M.$$ Hence $\sqrt{\fa + \Ann M} = \bigcap_{\fp \in \Min (\fa + \Ann M)} \fp \in S_{(0;M)},$ by (iii) and (ii).
   \end{itemize}

\end{proof}

In the following proposition we study when the set $S_{(I;M)}$ is empty.
\begin{prop} \label{l6}
Let $I$ be a proper ideal of $R$ which is generated by an $M$-regular sequence. Then, the following statements hold.
\begin{itemize}
   \item [(i)] If $S_{(I;M)}= \emptyset$ then $I$ is prime.
   \item [(ii)]  If $I$ is prime and $M$ is flat or $\frac{M}{IM}$ is torsion-free then $S_{(I;M)}= \emptyset$.
   \item [(iii)]  If $S_{(I;M)} \neq \emptyset$ then $\Max S_{(I;M)} = \Max \Ass \frac{M}{IM}.$
   \end{itemize}

\end{prop}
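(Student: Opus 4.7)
The plan is to mimic the reduction of Lemma \ref{l13} to the case $I=0$: pass to $\bar R=R/I$ and $\bar M=M/IM$, so that $\fa\mapsto \fa/I$ gives a bijection $S_{(I;M)}\to S_{(0;\bar M)}$ (the condition $\fa=IM:_R IM:_M \fa$ translates verbatim to $\fa/I=0:_{\bar R}0:_{\bar M}(\fa/I)$). After this reduction the three statements rest almost entirely on Lemma \ref{l13}(i) and (ii), which describe $\Ass R/\fa$ for $\fa\in S_{(0;M)}$ and identify the prime members of $S_{(0;M)}$ with $\Ass M\setminus\{0\}$.

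For (i), if $S_{(0;M)}=\emptyset$ then Lemma \ref{l13}(ii) yields $\Ass M\setminus\{0\}=\emptyset$; since $M\neq 0$ is finitely generated $\Ass M$ is nonempty, hence $\Ass M=\{0\}$, so $0$ is prime and consequently $I$ is prime. For (ii), once $0$ is prime $R$ is a domain, and either flatness of $M$ (flat over a domain implies torsion-free) or the torsion-freeness of $M/IM=M$ forces every nonzero $m\in M$ to have trivial annihilator, so $\Ass M=\{0\}$. If $\fa\in S_{(0;M)}$ existed, then $R/\fa\neq 0$ would have nonempty $\Ass R/\fa$, and Lemma \ref{l13}(i) would confine this set to $\{0\}$; picking $r\in R\setminus\fa$ with $\Ann_R(r+\fa)=0$ and using $\fa\subseteq\Ann_R(r+\fa)$ yields $\fa=0$, contradicting $\fa\supsetneq 0$.

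For (iii), take $\fa\in\Max S_{(I;M)}$ and pick any $\fp\in\Ass R/\fa$. Then $I\subsetneq\fa\subseteq\fp$, so $\fp\neq I$, and Lemma \ref{l13}(i), (ii) put $\fp\in\Ass(M/IM)\cap S_{(I;M)}$; maximality of $\fa$ forces $\fa=\fp$. Any $\fq\in\Ass(M/IM)$ containing $\fp$ satisfies $\fq\supsetneq I$ and hence lies in $S_{(I;M)}$, so maximality of $\fa=\fp$ gives $\fq=\fp$, proving $\fp\in\Max\Ass(M/IM)$. For the reverse inclusion, given $\fp\in\Max\Ass(M/IM)$, the hypothesis $S_{(I;M)}\neq\emptyset$ together with Lemma \ref{l13}(ii) exhibits some $\fq_0\in\Ass(M/IM)$ with $I\subsetneq\fq_0$; this rules out $\fp=I$ (otherwise $\fq_0$ would properly contain $\fp$), so Lemma \ref{l13}(ii) places $\fp\in S_{(I;M)}$. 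Finally, any $\fa'\in S_{(I;M)}$ containing $\fp$ satisfies $\fa'\subseteq\fp'$ for some $\fp'\in\Ass R/\fa'\subseteq\Ass(M/IM)$, and maximality of $\fp$ in $\Ass(M/IM)$ forces $\fp=\fp'=\fa'$, so $\fp\in\Max S_{(I;M)}$.

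The main bookkeeping hazard is (iii): verifying that $\fp=I$ cannot sneak into either direction of the correspondence once $S_{(I;M)}$ is assumed nonempty, and confirming that the two notions of maximality line up without an off-by-one at the distinguished ideal $I$. Parts (i) and (ii) are essentially direct readings of Lemma \ref{l13}(ii) and (i) respectively, once the reduction to $I=0$ is in place.
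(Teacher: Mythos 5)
Your proof is correct and follows essentially the same route as the paper: all three parts are read off from Lemma \ref{l13}, which identifies the prime members of $S_{(I;M)}$ with $\Ass \frac{M}{IM}\setminus\{I\}$ and confines $\Ass \frac{R}{\fa}$ to $\Ass \frac{M}{IM}$ for $\fa\in S_{(I;M)}$. The only cosmetic differences are that you make the reduction to $I=0$ explicit here (the paper performs it only inside the proof of Lemma \ref{l13}) and, in part (ii), you replace the paper's citation of Matsumura for the flat case by the direct observation that a flat module over a domain is torsion-free, which is a perfectly adequate substitute.
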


\begin{proof}
 \begin{itemize}
   \item [(i)] Note that $\Ass \frac{M}{IM} \neq \emptyset$ and so, by the assumption and \ref{l13}(ii), $I \in \Ass \frac{M}{IM}$.
   \item [(ii)] If $M$ is flat then $\Ass \frac{M}{IM} = \{I\}$, by \cite[23.2]{M}.

   Also, if $\frac{M}{IM}$ is torsion-free then, as $\frac{ M}{IM}$ has rank, it embeds in a finite copies of $\frac{R}{I}$. Hence $\Ass\frac{M}{IM} \subseteq \Ass \frac{R}{I}= \{I\}$.

        Now, assume that there exists $\fa \in S_{(I;M)}$. Then, by \ref{l13}, $$\sqrt{\fa} = \bigcap_{\fp \in \Lambda}\fp \in S_{(I;M)}$$ for a subset $\Lambda$ of $\Ass \frac{M}{IM}-\{I\} = \emptyset$, and this is a contradiction.

   \item [(iii)] Let $\fa \in \Max S_{(I;M)}$. Then, by \ref{l13}(iv), $\fa= \sqrt{\fa }= \bigcap_{\fp \in \Lambda} \fp\in S_{(I;M)}$ for some $\Lambda \subseteq \Ass \frac{M}{IM}-\{I\}$.  Therefore, by \ref{l13}(ii), $\fa \in \Max \Ass\frac{M}{IM}$.

       Now, let $\fp \in \Max \Ass\frac{M}{IM}$. Then, $\fp \neq I$, otherwise, as $$\Max S_{(I;M)} \subseteq \Max \Ass \frac{M}{IM} \subseteq V(I),$$ $\fp =I \subseteq \fq$ for all $\fq \in \Max S_{(I;M)}$. So, $I= \fq\in S_{(I;M)}$, which is a contradiction.

       Therefore, $\fp \in \Ass \frac{M}{IM} - \{I\}$ and, by \ref{l13}(ii), $\fp \in \Max S_{(I;M)}$.
   \end{itemize}

\end{proof}

As a corollary of the above proposition, we have a criterion for the existence of an ideal of $R$ linked by an $R$-regular sequence $I$.
\begin{cor} \label{c11}
Let $I$ be a proper ideal of $R$ which is generated by an $R$-regular sequence. Then, the following statements hold.
\begin{itemize}
   \item [(i)] There is no ideal of $R$ linked by $I$ if and only if $I$ is a prime ideal.
   \item [(ii)]  If there is an ideal of $R$ linked by $I$ then $$\Max  \{ \fa \triangleleft R | \fa \textrm{ is linked by } I \} = \Max \Ass \frac{R}{I}.$$
   \end{itemize}

\end{cor}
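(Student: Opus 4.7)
The plan is to specialize Proposition \ref{l6} to the case $M=R$, after first identifying $S_{(I;R)}$ with the collection of ideals of $R$ linked by $I$.

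First I would verify the identification $S_{(I;R)} = \{\fa \triangleleft R \mid \fa \text{ is linked by } I\}$. The inclusion $\supseteq$ is the content of the remark made in Convention \ref{F2}. For the reverse inclusion, given $\fa \in S_{(I;R)}$, I would set $\fb := I :_R \fa$. The defining relation of $S_{(I;R)}$ then yields $\fa = I :_R I :_R \fa = I :_R \fb$, while $\fb = I :_R \fa$ holds by construction. A quick check confirms that $\fb$ must be a proper ideal of $R$: otherwise $\fb = R$ would force $\fa = I :_R R = I$, contradicting $I \subsetneq \fa$. Thus the pair $(\fa,\fb)$ realizes $\fa$ as an ideal linked by $I$.

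With this identification in place, part (i) follows immediately. Since $R$ is flat over itself, Proposition \ref{l6}(i) and (ii) together give that $S_{(I;R)} = \emptyset$ if and only if $I$ is prime; translated through the identification, this is exactly the assertion that no ideal of $R$ is linked by $I$ if and only if $I$ is prime. Part (ii) follows similarly from Proposition \ref{l6}(iii): with $M = R$ one has $\frac{M}{IM} = \frac{R}{I}$, so the conclusion reads $\Max S_{(I;R)} = \Max \Ass \frac{R}{I}$, which, via the identification, is precisely the claimed equality.

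The only genuinely nontrivial ingredient is the identification of $S_{(I;R)}$ with the set of linked ideals, and this reduces to a routine manipulation of colon ideals as sketched above. I do not anticipate any serious obstacle, since the corollary is essentially a restatement of the preceding proposition in the classical setting $M = R$.
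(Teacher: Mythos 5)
Your proposal is correct and follows exactly the route the paper intends: the corollary is stated as an immediate consequence of Proposition \ref{l6} with $M=R$, using the identification of $S_{(I;R)}$ with the set of ideals linked by $I$ that is asserted (without proof) in Convention \ref{F2}. Your explicit verification of that identification --- setting $\fb := I :_R \fa$ and checking properness of $\fb$ via $I \subsetneqq \fa$ --- supplies the one detail the paper leaves implicit, and the rest matches the paper's argument.
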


We can also characterize linked ideals, as follows.
\begin{cor}\label{t6}
The following statements hold.
\begin{itemize}
   \item [(i)]  A radical ideal $\fa$ is linked if and only if $\fa = \bigcap_{\fp \in \Lambda} \fp$ for some $R$-regular sequence $(\fx)$ and some $\Lambda \subseteq \Ass \frac{R}{\fx}$.
   \item [(ii)]  Non-zero maximal ideals of $R$ are linked ideals. Moreover, the non-zero elements of $\Min R$ are linked ideals.
   \item [(iii)] If $R$ isn't reduced then the nilradical ideal $\sqrt{0}$ is a linked ideal.
   \end{itemize}
\end{cor}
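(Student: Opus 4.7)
The plan is to derive all three parts as applications of Lemma \ref{l13}, with part (i) doing the structural work and (ii)--(iii) following by choosing the regular sequence wisely.

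For the forward direction of (i), I would observe that if $\fa$ is a radical linked ideal then $\fa \in S_{((\underline{\fx});R)}$ for some $R$-regular sequence $\underline{\fx}$, and apply Lemma \ref{l13}(iv) directly to get $\sqrt{\fa}=\bigcap_{\fp\in\Lambda}\fp$ for some $\Lambda\subseteq \Ass R/(\underline{\fx})-\{(\underline{\fx})\}$; since $\fa$ is radical this is exactly the desired presentation of $\fa$. For the reverse direction, given $\fa=\bigcap_{\fp\in\Lambda}\fp$ with $\Lambda\subseteq \Ass R/(\underline{\fx})$, I would first discard $(\underline{\fx})$ from $\Lambda$ if it happens to lie there, so that $\Lambda\subseteq \Ass R/(\underline{\fx})-\{(\underline{\fx})\}$. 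Each $\fp\in\Lambda$ then belongs to $S_{((\underline{\fx});R)}$ by Lemma \ref{l13}(ii), and the finite-intersection closure in Lemma \ref{l13}(iii) (applicable since $\Ass R/(\underline{\fx})$ is finite) gives $\fa\in S_{((\underline{\fx});R)}$. Setting $\fb:=(\underline{\fx}):_R\fa$ then produces the linking companion.

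For the maximal-ideal part of (ii), let $\fm$ be a non-zero maximal ideal and split into two cases. If $\fm\in\Ass R$, the empty regular sequence (so $I=0$) together with $\Lambda=\{\fm\}\subseteq \Ass R/(0)$ shows $\fm$ linked via (i). If $\fm\notin\Ass R$, I would use prime avoidance to build a maximal $R$-regular sequence $\underline{\fx}$ inside $\fm$; then $\fm$ consists of zero-divisors on $R/(\underline{\fx})$, so by prime avoidance $\fm\subseteq\fp$ for some $\fp\in\Ass R/(\underline{\fx})$, and maximality of $\fm$ forces $\fm=\fp$. Applying (i) with $\Lambda=\{\fm\}$ finishes this case. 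For the minimal-primes statement, any non-zero $\fp\in\Min R$ lies in $\Ass R=\Ass R/(0)$, so (i) applies with empty regular sequence and $\Lambda=\{\fp\}$. Part (iii) falls out immediately: if $R$ is not reduced then $\sqrt{0}$ is a non-zero radical ideal, and $\sqrt{0}=\bigcap_{\fp\in\Min R}\fp$ with $\Min R\subseteq \Ass R=\Ass R/(0)$, so (i) applies with the empty regular sequence.

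The main obstacle I anticipate is the backward direction of (i): verifying that membership in $S_{((\underline{\fx});R)}$ actually delivers a bona fide linkage rather than a degenerate identity. In particular, one must check that the companion $\fb=(\underline{\fx}):_R\fa$ is proper and contains $(\underline{\fx})$, and must rule out the edge case where $(\underline{\fx})\in\Lambda$ and $\fa$ degenerates to $(\underline{\fx})$ itself; the rest of the argument is essentially bookkeeping against Lemma \ref{l13}, with the prime-avoidance step in (ii) being the only other nontrivial ingredient.
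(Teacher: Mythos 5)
There is a genuine gap in your backward direction of (i), and it propagates into the maximal-ideal part of (ii). Your move ``discard $(\underline{\fx})$ from $\Lambda$ if it happens to lie there'' is not legitimate: every prime in $\Ass \frac{R}{(\underline{\fx})}$ contains $(\underline{\fx})$, so if $(\underline{\fx})\in\Lambda$ then $\fa=\bigcap_{\fp\in\Lambda}\fp=(\underline{\fx})$, and deleting $(\underline{\fx})$ from $\Lambda$ changes the intersection to a strictly larger ideal (or to $R$). You then say one ``must rule out'' the degenerate case $\fa=(\underline{\fx})$, but it cannot be ruled out --- it is non-vacuous and the statement must still hold there. For example, in $R=k[[x,y]]$ take $\fa=(x)$ with $\Lambda=\{(x)\}\subseteq\Ass\frac{R}{(x)}$; more pointedly, for a regular local ring the maximal ideal $\fm=(x_1,\dots,x_d)$ lands exactly in this case, so your proof of (ii) breaks precisely where it is most needed. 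The missing idea, which is how the paper proceeds, is to replace the regular sequence $(x_1,\dots,x_n)$ by $(x_1,\dots,x_{n-1},x_n^2)$: by \cite[6.3 and Exercise 6.7]{M} this is again a regular sequence with $\Ass\frac{R}{(x_1,\dots,x_{n-1},x_n^2)}=\Ass\frac{R}{(\underline{\fx})}$, while the new ideal is strictly contained in $\fa$ and is not prime (hence not one of the associated primes). After this replacement one is genuinely in the situation where Lemma \ref{l13}(ii) and (iii) apply, and your concluding bookkeeping (properness of $\fb=I:_R\fa$, etc.) goes through. In the case $(x)\subseteq k[[x,y]]$ this produces the self-linkage $(x^2):(x)=(x)$.

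The rest of your argument is fine and close to the paper's. For the forward direction of (i) you invoke Lemma \ref{l13}(iv) applied to $\fa\in S_{((\underline{\fx});R)}$, whereas the paper cites \cite[Proposition 5, p.\ 594]{MS} to get $\Ass\frac{R}{\fa}\subseteq\Ass\frac{R}{I}$ directly; these are equivalent routes, and yours has the small advantage of staying internal to the paper. Your treatment of the minimal primes in (ii) and of (iii) is correct as stated, since there the relevant $\Lambda$ automatically avoids the zero ideal ($0\in\Ass R$ would force $R$ to be a domain) and the intersection is non-zero by hypothesis, so no squaring trick is needed. Your case split for maximal ideals via prime avoidance matches what the paper does implicitly, but, as noted, the subcase $\fm=(\underline{\fx})$ still requires the replacement $(x_1,\dots,x_{t-1},x_t^2)$.
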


\begin{proof}
(i) Assume that $\fa$ is a linked ideal by $I$. Hence, by \cite[Proposition 5. p594]{MS}, $\Ass \frac{R}{\fa} \subseteq \Ass \frac{R}{I}$ and $\fa = \bigcap_{\fp \in \Lambda} \fp$ for some $\Lambda \subseteq \Ass \frac{R}{I}$.

Now, let $\underline{\fx} = (x_1,...,x_n)$ be an regular sequence and $\fa = \bigcap_{\fp \in \Lambda} \fp$ for some $\Lambda \subseteq \Ass \frac{R}{\fx}$. Then, in view of \cite[6.3 and Exersice 6.7]{M} and considering the regular sequence $(x_1,...,x_{n-1}, x^2_n)$, we may assume that $\underline{\fx} \notin \Ass \frac{R}{\fx}$ and that $\fa \supsetneqq \underline{\fx}$. Now, the result follows from \ref{l13}(ii) and (iii).

(ii) and (iii) follow from (i).

\end{proof}
One may ask whether linking over a module implies linking over the ring and vice versa. In the following corollary, we consider a case where linking over the canonical module implies linking over $R$. For some other cases, we refer the reader to \cite[\S4]{JS}.

\begin{cor}\label{l8}
Let $(R, \fm)$ be an unmixed complete local ring with the canonical module $w_R$ and $\fa$ and $\fb$ be two ideals of $R$ such that $\fa\sim_{(0;w_R)}\fb$. Then $\sqrt{\fa}$ is a linked ideal over $R$.
\end{cor}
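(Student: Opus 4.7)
The overall approach is to deduce from $\fa\sim_{(0;w_R)}\fb$ that $\sqrt{\fa}$ lies in $S_{(0;w_R)}$, then recognize $\sqrt{\fa}$ as an intersection of associated primes of $R$ itself, so that Corollary \ref{t6}(i), applied with the empty $R$-regular sequence, supplies the desired linkage over $R$. Concretely, the first step will be to show that, under the unmixedness hypothesis, $\sqrt{\fa+\Ann w_R}=\sqrt{\fa}$. For this I will invoke two standard facts: for a complete local ring with canonical module one has $\Ass w_R=\Assh R$, and unmixedness forces every associated prime of $R$ to be minimal, so $\Ass R=\Assh R=\Min R$. Consequently
$$\Ann w_R \subseteq \bigcap_{\fp\in\Ass w_R}\fp = \bigcap_{\fp\in\Min R}\fp = \sqrt{0} \subseteq \sqrt{\fa},$$
which yields the claimed equality. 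Lemma \ref{l13}(v) then immediately gives $\sqrt{\fa}\in S_{(0;w_R)}$.

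Next, Lemma \ref{l13}(iv) lets me write $\sqrt{\fa}=\bigcap_{\fp\in\Lambda}\fp$ for some $\Lambda\subseteq \Ass w_R\setminus\{0\} = \Ass R\setminus\{0\} = \Ass(R/(0))\setminus\{0\}$, where the identifications again use unmixedness together with $\Ass w_R=\Assh R$. Since $\fa\neq 0$ (otherwise $0:_{w_R}\fa = w_R$ would force $\fb w_R = w_R$, contradicting the linkage hypothesis), one has $\sqrt{\fa}\supsetneq 0$, so this representation fits the criterion of Corollary \ref{t6}(i) with $\underline{\fx}$ the empty $R$-regular sequence. Hence $\sqrt{\fa}$ is linked over $R$.

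The main obstacle is conceptual rather than technical: the $R$-regular sequence realizing the linkage must necessarily be empty, because every element of $\sqrt{\fa}$ belongs to some prime in $\Lambda\subseteq \Min R$ and is therefore a zero-divisor. This is the same boundary case of Corollary \ref{t6}(i) that underlies Corollary \ref{t6}(iii) for the nilradical, and recognizing that the argument goes through in exactly this degenerate case is the crux of the proof.
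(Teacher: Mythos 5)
Your proof is correct and follows essentially the same route as the paper's: Lemma \ref{l13}(v), the identification $\Ass w_R = \Assh R$ (hence $=\Ass R$ by unmixedness), and Corollary \ref{t6}(i) applied with the empty regular sequence. The only difference is that the paper cites \cite[2.2(e)]{HH} to conclude $\Ann w_R = 0$ outright, whereas you settle for the weaker but equally sufficient containment $\Ann w_R \subseteq \sqrt{0} \subseteq \sqrt{\fa}$, which gives $\sqrt{\fa + \Ann w_R} = \sqrt{\fa}$ just the same.
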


\begin{proof}
 By local duality theorem \cite[11.2.6]{BS} and \cite[11.2.7(iii)]{BS}, $$w_R \cong \Hom_R(H^{\dim R}_{\fm}(R), E(\frac{R}{\fm})).$$ Therefore, in view of  \cite[11.3.6 and 10.2.20]{BS}, \begin{equation}\label{e4}
                                                                              \Ass w_R =\Att H^{\dim R}_{\fm}(R) = \Assh R.
                                                                            \end{equation}
 Also, using \cite[2.2(e)]{HH}, $\Ann w_R = Ker(R \rightarrow \Hom_R(w_R,w_R)) = 0$. Hence, by \ref{l13}, $\sqrt{\fa} = \bigcap_{\fp \in \Lambda} \fp$ for some $\Lambda \subseteq \Ass w_R$. Now, the result follows from (\ref{e4}) and \ref{t6}(i).

\end{proof}

\begin{defn}\label{F3}
 \emph{Let $(-)^*:= \Hom_R(-,R)$ and consider an exact sequence $F_2 \overset{f}{\rightarrow} F_1 \overset{g}{\rightarrow} M \rightarrow 0$, where $F_1$ and $F_2$ are free $R$-modules. Setting $\Tr M := \coker f^*$ and $\lambda M := \Omega \Tr M$, where "$\Omega$" is the first syzygy module, we get the exact sequences $$0 \rightarrow M^* \overset{g^*}{\rightarrow} (F_1)^* \rightarrow \lambda M \rightarrow 0$$ and $$0 \rightarrow M^* \overset{g^*}{\rightarrow} (F_1)^* \overset{f^*}{\rightarrow} (F_2)^* \rightarrow \Tr M \rightarrow 0.$$ Now, following \cite{MS}, finitely generated $R$-modules $M$ and $N$ are said to be horizontally linked, denoted by $M \sim N$, if $ \lambda M \cong N$ and $\lambda N \cong M$.}
 \end{defn}

As another example of linked ideals we have the following corollary. Also, in \cite{DST}, the authors study the relation between linkness of $M$ and that of $\Ann M$ as an ideal. In the following, we consider this problem, too.
\begin{cor}\label{c1}
Let $M$ be a horizontally linked $R$-module such that $\Ann M \neq 0$ or $R$ is not reduced. Then $\sqrt{\Ann M }$ is a linked ideal.
\end{cor}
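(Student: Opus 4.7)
The plan is to split on the disjunctive hypothesis and reduce to Corollary~\ref{t6} in each case. If $\Ann M = 0$, the hypothesis forces $R$ to be non-reduced, so $\sqrt{\Ann M} = \sqrt{0}$ is a non-zero nilradical and Corollary~\ref{t6}(iii) gives the linkage immediately; this disposes of the easy case.

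The content lies in the remaining case $\Ann M \neq 0$. Here I would aim to apply Corollary~\ref{t6}(i) with the empty $R$-regular sequence $\underline{\fx}$, so that $R/(\underline{\fx}) = R$ and $\Ass R/(\underline{\fx}) = \Ass R$. Taking $\Lambda := \Min \Ann M$, the identity $\sqrt{\Ann M} = \bigcap_{\fp \in \Lambda} \fp$ is automatic, so the whole problem reduces to verifying $\Lambda \subseteq \Ass R$.

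The step I expect to be the crux is to deduce from horizontal linkage (Definition~\ref{F3}) that $\Ass M \subseteq \Ass R$. If $M \sim N$, then $M \cong \lambda N = \Omega \Tr N$. Splitting the four-term sequence $0 \to N^* \to F_1^* \to F_2^* \to \Tr N \to 0$ into short exact pieces gives $\Omega \Tr N \cong \Im(F_1^* \to F_2^*) \subseteq F_2^*$, exhibiting $M$ as a submodule of the free module $F_2^*$. Hence $\Ass M \subseteq \Ass F_2^* = \Ass R$, and in particular $\Min \Ann M \subseteq \Ass R$, as required.

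To apply Corollary~\ref{t6}(i) cleanly with the empty regular sequence, I should also confirm that the sequence itself does not appear among the relevant associated primes (the role played by the $(x_1, \ldots, x_{n-1}, x_n^2)$ trick in the author's proof of \ref{t6}(i)). Since $M$ embeds in a free module and $\Ann M \neq 0$, the ring $R$ cannot be a domain --- otherwise $M$ would be torsion-free and $\Ann M$ would vanish. Hence $0 \notin \Ass R$, so $0 \notin \Lambda$, and the conclusion of Corollary~\ref{t6}(i) applies to give that $\sqrt{\Ann M}$ is linked.
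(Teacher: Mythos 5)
Your proposal is correct and follows essentially the same route as the paper: the paper's proof also observes that $M$, being horizontally linked, is a syzygy (hence embeds in a free module, giving $\Ass M \subseteq \Ass R$), writes $\sqrt{\Ann M}$ as an intersection of primes from $\Ass R$, uses the disjunctive hypothesis only to guarantee $\sqrt{\Ann M}\neq 0$, and invokes Corollary~\ref{t6}. Your explicit case split and the verification that $0\notin\Lambda$ merely spell out details the paper leaves implicit (indeed $0\notin\Lambda$ already follows from $\bigcap_{\fp\in\Lambda}\fp=\sqrt{\Ann M}\neq 0$).
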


\begin{proof}

  As $M$ is a syzygy, $\Ass M \subseteq \Ass R$ and $\sqrt{\Ann M } = \cap_{\fp \in \Lambda} \fp$ for some $\Lambda \subseteq \Ass R$. Also, using the assumption, $0 \subsetneqq \sqrt{0} \subseteq \sqrt{\Ann M }$ and the result follows from \ref{t6}.
\end{proof}

In the next two items we describe the ideals that are linked by a radical ideal and show that they are, actually, geometrically linked.

\begin{thm}\label{l4}
Let $I$ be an ideal of $R$ which is generated by an $R$-regular sequence and $\Ass \frac{R}{I} = \Min \Ass \frac{R}{I}.$ Let $I = \cap_{i=1}^n \fq_i$ be the minimal primary decomposition of $I$. Then
\begin{itemize}
\item[(i)]  $\cap_{i\in\Lambda}\fq_i$ and $\cap_{i\in\{1,...,n\}-\Lambda}\fq_i$ are geometrically linked by $I$, where $\Lambda\subset \{1,...,n\}.$
\item[(ii)]  If $I$ is radical then all ideals which are linked by $I$ are radical.
\end{itemize}
\end{thm}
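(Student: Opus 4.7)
For part (i), the plan is to unpack everything directly from the primary decomposition of $I$. Writing $\fq_i$ as $\fp_i$-primary and setting $\fa := \bigcap_{i \in \Lambda}\fq_i$ and $\fb := \bigcap_{i \in \{1,\ldots,n\}\setminus\Lambda}\fq_i$, the geometric identity $\fa \cap \fb = \bigcap_{i=1}^n \fq_i = I$ is immediate, and combined with $\fa\fb \subseteq I$ it already yields $\fa \subseteq I:_R \fb$ and $\fb \subseteq I:_R \fa$. For the reverse inclusion $I:_R \fb \subseteq \fa$, fix $x \in I:_R \fb$ and $i \in \Lambda$. The hypothesis $\Ass R/I = \Min \Ass R/I$ makes the primes $\fp_j$ pairwise incomparable, which forces $\fb = \bigcap_{j \notin \Lambda}\fq_j \not\subseteq \fp_i$; otherwise primality of $\fp_i$ would give some $\fq_j \subseteq \fp_i$ with $j \neq i$, hence $\fp_j \subseteq \fp_i$, contradicting incomparability. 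Choosing $y \in \fb \setminus \fp_i$, the relation $xy \in x\fb \subseteq I \subseteq \fq_i$ combined with the $\fp_i$-primary property of $\fq_i$ forces $x \in \fq_i$; intersecting over $i \in \Lambda$ then gives $x \in \fa$. Swapping $\Lambda$ with its complement yields $I:_R \fa \subseteq \fb$, completing both the linkage $\fa \sim_{(I;R)} \fb$ and the geometric condition.

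For (ii), the primary components $\fq_j$ coincide with the primes $\fp_j$ since $I$ is radical. Let $\fa$ be any ideal linked by $I$, so $\fa = I:_R \fc$ for some ideal $\fc$; the plan is to verify radicality of $\fa$ directly. If $x^m \in \fa$ for some $m \geq 1$, then $x^m \fc \subseteq I \subseteq \fp_j$ for every $j$, and primality of $\fp_j$ gives either $x \in \fp_j$ or $\fc \subseteq \fp_j$, hence $x\fc \subseteq \fp_j$ in either case. Intersecting over $j$ yields $x\fc \subseteq \bigcap_j \fp_j = I$, so $x \in I:_R \fc = \fa$, as required.

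The main technical point is the avoidance step in (i): establishing $\fb \not\subseteq \fp_i$ for each $i \in \Lambda$ is precisely what the hypothesis $\Ass R/I = \Min \Ass R/I$ is designed to accomplish, after which the defining property of a primary ideal closes the colon computation. Part (ii) is then a short direct deduction that does not even invoke the regular-sequence hypothesis on $I$ beyond its role in the definition of linkage, and it depends only on the decomposition of the radical ideal $I$ as an intersection of primes.
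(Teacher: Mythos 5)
Your proof is correct. For part (i) you are doing essentially what the paper does: the paper computes $I:(\cap_{i=1}^r\fq_i)=\cap_{j=1}^n(\fq_j:(\cap_{i=1}^r\fq_i))=\cap_{j>r}\fq_j$, using exactly the avoidance fact $\cap_{i\le r}\fq_i\nsubseteq\fp_j$ for $j>r$ that you isolate; your element-wise argument with a chosen $y\in\fb\setminus\fp_i$ and the primary property of $\fq_i$ is just the unpacked version of the identity $\fq_i:\fb=\fq_i$. For part (ii), however, you take a genuinely different and leaner route. The paper takes a minimal primary decomposition $\fa=\cap_j Q_j$, invokes the Martsinkovsky--Strooker result that the associated primes of $R/\fa$ lie in $\Ass(R/I)$, manufactures from each $Q_j$ an alternative minimal decomposition of $I$, and then applies the second uniqueness theorem (together with $\Ass\frac{R}{I}=\Min\Ass\frac{R}{I}$) to conclude $Q_j=\fp_j$. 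You instead observe that a linked ideal is of the form $I:_R\fc$ and that a colon into a radical ideal is radical, via the dichotomy $x\in\fp_j$ or $\fc\subseteq\fp_j$ for each prime component; this avoids the external reference and the uniqueness theorem entirely, and in fact proves the stronger statement that $I:_R\fc$ is radical for \emph{every} ideal $\fc$ when $I$ is radical. The only things you leave implicit, as does the paper, are the tacit assumptions that $\Lambda$ is a nonempty proper subset (so that both intersections are proper ideals and the linkage definition applies) and, in (ii), that the hypothesis $\Ass\frac{R}{I}=\Min\Ass\frac{R}{I}$ is automatic for radical $I$; neither affects correctness.
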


\begin{proof}
\begin{itemize}
\item[(i)] Let $\fq_i$ be $\fp_i$-primary, for all $i=1,...,n$. We have
 \begin{eqnarray*}
 I:(\cap_{i=1}^r\fq_i)  &=& \cap_{j=1}^n(\fq_j: (\cap_{i=1}^r\fq_i))\\
 & =& \cap_{j=r+1}^n(\fq_j: (\cap_{i=1}^r\fq_i))= \cap_{j=r+1}^n\fq_j
 \end{eqnarray*}
  The last equality follows from the fact that $\cap_{i=1}^r\fq_i \nsubseteq \fp_j$ for all $j>r$.
\item[(ii)]
Assume that $\fa$ is linked by $I$. If $\fa = \cap_{j=1}^r Q_{j}$ is a minimal primary decomposition of $\fa$ then, by \cite[Proposition 5. p594]{MS}, for all $j = 1,...,r,$ $Q_{j}$ is $\fp_{j}$-primary for some $\fp_j \in \Ass \frac{R}{I}$. For all $j = 1,...,r,$ we have $$I \subseteq Q_j \cap \cap_{\fp \in \Ass \frac{R}{I}-\{\fp_j\}} \fp \subseteq \cap_{\fp \in \Ass \frac{R}{I}}\fp = I.$$ Therefore, $Q_j \cap \cap_{\fp \in \Ass \frac{R}{I}-\{\fp_j\}} \fp $ is another minimal decomposition of $I$. Via of $\Ass \frac{R}{I} = \Min \Ass \frac{R}{I}$ and second uniqueness theorem, $Q_j = \fp_j$ for all $j = 1,...,r$. Therefore, $\fa$ is radical.
\end{itemize}

\end{proof}

\begin{cor}

Let $I$ be a radical ideal of $R$ which is generated by an $R$-regular sequence. Then the ideal $\fa$ is linked by $I$ if and only if $\fa = \cap _{\fp \in \Lambda} \fp$ for some $\Lambda \subset\Ass \frac{R}{I}$. In this case, $\fa$ and $\fb:= \cap _{\fp \in \Ass \frac{R}{I}- \Lambda} \fp$ are geometrically linked.
\end{cor}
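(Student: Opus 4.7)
The plan is to deduce this corollary directly from Theorem \ref{l4}. Since $I$ is radical, every element of $\Ass \frac{R}{I}$ is a minimal prime of $I$, and the minimal primary decomposition of $I$ takes the simple form $I = \bigcap_{\fp \in \Ass \frac{R}{I}} \fp$. In particular, the hypothesis $\Ass \frac{R}{I} = \Min \Ass \frac{R}{I}$ of Theorem \ref{l4} is satisfied, so both parts of that theorem are available.

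For the forward direction, assume $\fa$ is linked by $I$. Theorem \ref{l4}(ii) then gives that $\fa$ is radical, hence $\fa = \bigcap_{\fp \in \Ass \frac{R}{\fa}} \fp$. Combining this with the standard inclusion $\Ass \frac{R}{\fa} \subseteq \Ass \frac{R}{I}$ for linked ideals (used in the proof of Theorem \ref{l4} and cited from \cite{MS}), setting $\Lambda := \Ass \frac{R}{\fa}$ yields $\fa = \bigcap_{\fp \in \Lambda} \fp$ with $\Lambda \subseteq \Ass \frac{R}{I}$; the strict inclusion $\Lambda \subsetneq \Ass \frac{R}{I}$ follows from the convention $I \subsetneqq \fa$ built into the definition of linkage (cf.\ Convention \ref{F2}).

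For the backward direction, suppose $\fa = \bigcap_{\fp \in \Lambda} \fp$ for some non-empty proper subset $\Lambda \subset \Ass \frac{R}{I}$, and set $\fb := \bigcap_{\fp \in \Ass \frac{R}{I} - \Lambda} \fp$. I would apply Theorem \ref{l4}(i) with the partition of $\Ass \frac{R}{I}$ into $\Lambda$ and its complement. Inspecting the proof of that theorem yields the colon equalities $\fa = I :_R \fb$ and $\fb = I :_R \fa$, so $\fa$ and $\fb$ are linked in the sense of Definition \ref{F1}; together with $\fa \cap \fb = I$, this also gives the geometric linkage claimed in the last sentence of the corollary.

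Overall the statement is essentially a repackaging of Theorem \ref{l4} in the special case where $I$ is radical, so no serious obstacle arises. The only subtle point worth flagging is to make sure that Theorem \ref{l4}(i), though stated as ``geometrically linked'', in fact delivers genuine linkage and not merely the set-theoretic equality $\fa \cap \fb = I$; this is visible from the colon computation inside its proof and is what lets the backward direction conclude that $\fa$ is linked by $I$, not merely intersects $\fb$ in $I$.
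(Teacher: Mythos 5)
Your proposal is correct and follows exactly the route the paper intends: the corollary is stated without proof immediately after Theorem \ref{l4}, as a direct consequence of parts (i) and (ii) of that theorem (with the radical hypothesis giving $\Ass \frac{R}{I}=\Min \Ass\frac{R}{I}$ and the primary components $\fq_i=\fp_i$). Your remark that the colon computation inside the proof of Theorem \ref{l4}(i) yields genuine linkage, not just $\fa\cap\fb=I$, is exactly the point that makes the backward direction work.
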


In the theory of local cohomology modules, computing the annihilator of these modules attracts lots of interest, see for example \cite{BRS}, \cite{SS} and \cite{Z}.

The following proposition consider a case where the annihilator of some local cohomology modules are linked. For another case see example \ref{ex2}.

\begin{prop}\label{p1}
Let $(R, \fm)$ be a complete local ring of dimension $d>0$ and $\fa$ and $\fb$ be two ideals of $R$ such that $\fa\sim_{(0;R)}\fb$ and $\cd(\fa,R)=d$. Then, the following statements hold.
 \begin{itemize}
   \item [(i)]  $\sqrt{0:H^d_{\fa}(R)}$ is a linked ideal.
   \item [(ii)]  If $R$ is unmixed and $\fa+\fb$ is $\fm$-primary then $0:H^d_{\fa}(R)=\fb$ and $0:H^d_{\fb}(R)=\fa.$
  \end{itemize}
  \end{prop}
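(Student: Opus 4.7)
For part (i), the plan is to realize $\sqrt{0:_R H^d_\fa(R)}$ as an intersection of associated primes and invoke Corollary~\ref{t6}(i). Since $\fa \sim_{(0;R)} \fb$ makes $\fb = 0:_R \fa$ non-zero, $R$ is not a domain and so $0 \notin \Ass R$; the hypothesis $\cd(\fa,R)=d$ forces the Artinian module $H^d_\fa(R)$ to be non-zero. The standard description over a complete local ring gives $\sqrt{0:_R H^d_\fa(R)} = \bigcap_{\fp \in \Att_R H^d_\fa(R)} \fp$ with $\Att_R H^d_\fa(R) \subseteq \Assh R \subseteq \Ass R$. Corollary~\ref{t6}(i), applied with the empty $R$-regular sequence (so that $\Ass R/(\underline{\fx}) = \Ass R$) and $\Lambda := \Att_R H^d_\fa(R)$, then identifies $\sqrt{0:_R H^d_\fa(R)}$ as a linked radical ideal.

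For part (ii), the strategy is to combine Mayer--Vietoris for local cohomology with Matlis duality against the canonical module $w_R$. The first step is to show $\fa \cap \fb = 0$: any $x \in \fa \cap \fb$ satisfies $x(\fa + \fb) = 0$ (from $\fb = 0:_R\fa$ and $\fa = 0:_R\fb$), so $x \in \Gamma_\fm(R)$ because $\fa + \fb$ is $\fm$-primary, and $\Gamma_\fm(R) = 0$ since $R$ being unmixed with $d > 0$ forces $\fm \notin \Ass R$. With $\fa \cap \fb = 0$ and $H^i_{\fa+\fb}(R) = H^i_\fm(R)$, the Mayer--Vietoris sequence, together with $H^i_{0}(R)=0$ for $i\ge 1$, yields a surjection $H^d_\fm(R) \twoheadrightarrow H^d_\fa(R) \oplus H^d_\fb(R)$ whose kernel has finite length (zero when $d\ge 2$; a quotient of $R/(\fa+\fb)$ when $d=1$). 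Matlis-dualizing and invoking local duality $\Hom_R(H^d_\fm(R), \E(R/\fm)) \cong w_R$, set $M_a := \Hom_R(H^d_\fa(R), \E(R/\fm))$ and $M_b := \Hom_R(H^d_\fb(R), \E(R/\fm))$, to obtain an embedding $M_a \oplus M_b \hookrightarrow w_R$ with finite-length cokernel.

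Two observations complete the argument. First, $\Ann_R(M_a \oplus M_b) = 0$: for each $\fp \in \Min R = \Ass R$ (unmixedness) the finite-length cokernel vanishes after localizing at $\fp$, so $(M_a \oplus M_b)_\fp = (w_R)_\fp \cong \E_{R_\fp}(R_\fp/\fp R_\fp)$, which is faithful over the Artinian local ring $R_\fp$; hence $\Ann_R(M_a \oplus M_b) \subseteq \bigcap_{\fp \in \Min R} \Ker(R \to R_\fp) = 0$, the final equality being the primary decomposition of $(0)$ in unmixed $R$. Second, a direct \v{C}ech computation using generators $x_1,\ldots,x_s$ of $\fa$ gives $\fb \subseteq \Ann_R H^d_\fa(R)$: each $b \in \fb = 0:_R \fa$ satisfies $bx_i = 0$, so $b$ becomes zero in every localization $R_{x_{i_1}\cdots x_{i_k}}$ with $k\ge 1$, hence annihilates the \v{C}ech complex in positive degree and therefore $H^d_\fa(R)$; symmetrically $\fa \subseteq \Ann_R H^d_\fb(R)$. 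Combining,
\[
\Ann_R H^d_\fa(R) \cdot \fa \subseteq \Ann_R M_a \cdot \Ann_R M_b \subseteq \Ann_R M_a \cap \Ann_R M_b = \Ann_R(M_a \oplus M_b) = 0
\]
(using that $R$ is complete, so $\Ann_R H^d_\fa(R) = \Ann_R M_a$ by Matlis duality), so $\Ann_R H^d_\fa(R) \subseteq 0:_R \fa = \fb$; with the reverse containment, equality holds, and the statement for $H^d_\fb(R)$ is symmetric. The main obstacle is the $d=1$ case, where Mayer--Vietoris yields an extension rather than a direct sum decomposition of $H^d_\fm(R)$; however the finite-length kernel is $\fm$-primary and thus vanishes at every minimal prime, so the localization step goes through uniformly in $d \ge 1$.
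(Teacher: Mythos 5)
In part~(i) you follow essentially the same route as the paper --- write $\sqrt{0:H^d_{\fa}(R)}$ as $\bigcap_{\fp \in \Att H^d_{\fa}(R)}\fp$ with $\Att H^d_{\fa}(R)\subseteq \Assh R\subseteq \Ass R$ and invoke Corollary~\ref{t6}(i) with the empty regular sequence --- but there is a gap: you never check that this ideal is non-zero. By the paper's standing convention a linked ideal is a non-zero proper ideal, and Corollary~\ref{t6}(i) requires $\fa \supsetneqq (\underline{\fx})$, here $\fa \neq 0$. Your two observations do not cover this: $H^d_{\fa}(R)\neq 0$ only makes the annihilator proper, and $0\notin \Ass R$ only makes each prime in $\Lambda$ non-zero, while an intersection of non-zero primes can still vanish (it does exactly when $R$ is reduced and $\Att H^d_{\fa}(R)\supseteq \Min R$). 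The paper devotes the first paragraph of its proof to ruling this out, via Lynch's theorem together with the fact that a linked ideal lies inside some associated prime of $R$. The cheapest repair is already in your own part~(ii): the \v{C}ech computation gives $0\neq \fb \subseteq 0:H^d_{\fa}(R)$; you just need to say this in~(i).

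Part~(ii) is correct and genuinely different from the paper's argument. The paper computes $\Att H^d_{\fa}(R)=\Ass R - V(\fa)$ explicitly, writes $0:H^d_{\fa}(R)$ as the intersection of the corresponding primary components of $(0)$, and matches this with $\fb$ using $\fa\cap\fb=0$ and the second uniqueness theorem for primary decomposition. You instead use Mayer--Vietoris and local duality to embed $M_a\oplus M_b$ into $w_R$ with finite-length cokernel, deduce $\Ann (M_a\oplus M_b)=0$ by localizing at minimal primes, and combine this with the elementary containment $\fb\subseteq 0:H^d_{\fa}(R)$. The steps check out: $\fa\cap\fb=0$ follows from $\depth R>0$; the $d=1$ kernel is a cyclic submodule of the Artinian module $H^1_{\fm}(R)$, hence of finite length; and $\Ann H^d_{\fa}(R)=\Ann M_a$ by Matlis duality. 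The trade-off: the paper's proof yields an explicit description of the attached primes and exhibits $\fa$ and $\fb$ as complementary sets of primary components, while yours avoids Lynch's theorem and the uniqueness theorem but leans on non-trivial facts about the canonical module of a possibly non-Cohen--Macaulay unmixed complete local ring, namely $(w_R)_{\fp}\cong w_{R_{\fp}}\cong \E_{R_{\fp}}(R_{\fp}/\fp R_{\fp})$ for $\fp\in\Min R$ (Aoyama's theorem). You could bypass the localization entirely: $\fm^N w_R\subseteq M_a\oplus M_b$ for some $N$, $\Ann w_R=0$, and $\fm^N$ contains a non-zerodivisor since $\depth R>0$, which forces $\Ann(M_a\oplus M_b)=0$ directly.
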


\begin{proof}
\begin{itemize}
   \item [(i)]   First we claim that $0:H^d_{\fa}(R)\neq 0$. Suppose the contrary. Then, in view of \cite[2.4]{L}, $\Ass R= \Assh R$ and $\sqrt{\fa +\fp}=\fm$ for all $\fp \in \Assh R.$ On the other hand, by \cite[Proposition 5. p594]{MS}, there are some $\fp\in\Ass R$ such that $\fp \supseteq \fa.$ This implies that $\fp = \fm$ which is a contradiction.

 Now, let $0= \cap_{i=1}^n \fq_i$ be a minimal primary decomposition of 0 such that $\fq_i$ is $\fp_i$-primary, for all $i=1,...,n$. Then, by \cite[2.4]{L}, \begin{equation}\label{e} 0:H^d_{\fa}(R) = \cap_{j=1}^r \fq_{i_j},\end{equation} for some $\{i_1,...,i_r\}\subset\{1,...,n\}.$ Therefore, by \ref{t6}, $\sqrt{0:H^d_{\fa}(R)}$ is a linked ideal.

\item [(ii)]   Let $R$ be unmixed. Hence, by theorem \ref{l4} and (\ref{e}), $0:H^d_{\fa}(R)$ is a linked ideal. Also, via \cite[8.2.6]{BS} and the fact that $d>0$, $\Att H^d_{\fa}(R) \subseteq \Ass R - V(\fa).$ Moreover, let $\fp \in\Ass R - V(\fa)$. Then, in view of \cite[2.2]{JS}, $\fp \supseteq \fb$ and by the assumption $\sqrt{\fa +\fp}=\fm$. This implies that $\fp \in \Att H^d_{\fa}(R).$ Therefore, $\Att H^d_{\fa}(R) = \Ass R - V(\fa)$ and \begin{equation}\label{e1} 0:H^d_{\fa}(R) = \bigcap ^n_{i= 1, \fp_i\nsupseteq \fa} \fq_i. \end{equation}

 Similarly, $\Att H^d_{\fb}(R) = \Ass R - V(\fb).$ We claim that $H^d_{\fb}(R)\neq 0$. Suppose the contrary, i.e. $\Ass R = V(\fb)$. So, by \cite[Proposition 5. p594]{MS}, there are some $\fp\in\Ass R$ such that $\fp \supseteq \fa + \fb.$ It follows from the assumption that $\fp= \fm$ which is a contradiction. Then, \begin{equation}\label{e2}0:H^d_{\fb}(R) = \bigcap^{n}_{i= 1, \fp_i\nsupseteq \fb} \fq_i. \end{equation}

On the other hand, let $\fa = \cap_{i=1}^k Q_{i}$ and $\fb = \cap_{j=1}^l Q'_{j}$ be the minimal primary decompositions of $\fa$ and $\fb.$ Then, via the fact that $\Ass R \cap V(\fa + \fb)= \emptyset,$ $\fa$ and $\fb$ are geometrically linked and so $\fa\cap\fb=0$. Hence, $0=\cap_{i=1}^r Q_{i}\bigcap \cap_{j=1}^l Q'_{j}$ is another minimal primary decompositions of 0 and using the second uniqueness theorem, without lose of generality, one may assume that $\fa=\cap_{i=1}^r\fq_i$ and $\fb = \cap_{i=r+1}^n\fq_i.$ Now, let $\fp_i\nsupseteq \fa$, for some $i=1,...,n.$ Then, $\fq_i\nsupseteq \fa$ and so $i>r$ and $\fq_i\supseteq \fb.$ Also, if $\fq_i\supseteq \fb,$ for some $i=1,...,n,$ then $\fq_i\nsupseteq \fa,$ else $\fp_i\supseteq \fa$ and $\fp_i\in \Ass R \cap V(\fa + \fb)=\emptyset$. Hence $$\{\fq_i| \fq_i\supseteq \fb\} =\{\fq_i| \fq_i\nsupseteq \fa\}.$$ This implies that $\fb= \cap^{n}_{i= 1, \fp_i\nsupseteq \fa}  \fq_i$ and $\fa = \cap^{n}_{i= 1, \fp_i\nsupseteq \fb}  \fq_i$. Now, the result follows from (\ref{e1}) and (\ref{e2}).
\end{itemize}

\end{proof}


\section{characterization of some special rings in terms of linkage}

In this section, we characterize Cohen-Macaulay, Gorenstein and regular local rings in terms of the linked ideals.

\begin{prop}\label{t11}
Let $R$ be a Cohen-Macaulay ring. Then
\begin{itemize}
   \item [(i)]  $\fp$ is a linked ideal, for all $\fp \in \Spec R- \{0\}$.
   \item [(ii)]  $\fp_1 \cap \fp_2$ is a linked ideal, for all $\fp_1,\fp_2 \in \Spec R$ with $\fp_1 \cap \fp_2\neq 0$ and $\h \fp_1 = \h \fp_2$.
  \end{itemize}
\end{prop}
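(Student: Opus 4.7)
The strategy is to reduce everything to Corollary \ref{t6}(i): a radical ideal is linked precisely when it is an intersection of primes drawn from $\Ass(R/\underline{\fx})$ for some $R$-regular sequence $\underline{\fx}$. Since primes and intersections of primes are always radical, the only real task is to produce an $R$-regular sequence in the ideal in question whose quotient has the right associated primes. This is where Cohen--Macaulayness will do all the work: in a CM ring, every ideal $J$ satisfies $\grad J = \h J$, and any ideal generated by a maximal $R$-regular sequence of length $h$ is height-unmixed, so every minimal prime of $\underline{\fx}$ is already an associated prime of $R/\underline{\fx}$.

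For part (i), fix $\fp \in \Spec R - \{0\}$ and set $h = \h \fp$. If $h = 0$, then $\fp$ is a non-zero minimal prime of $R$, and I can invoke Corollary \ref{t6}(ii) directly (equivalently, apply Corollary \ref{t6}(i) with the empty $R$-regular sequence, noting that $\fp \in \Ass R$ since $R$ is CM, hence $\Min R = \Ass R$). If $h \geq 1$, then $\grad \fp = h$ by the Cohen--Macaulay hypothesis, so I pick an $R$-regular sequence $\underline{\fx} = x_1,\dots,x_h$ in $\fp$. Then $\fp$ is a minimal prime over $(\underline{\fx})$, an ideal of height $h$. Because $R$ is CM, $R/(\underline{\fx})$ is again CM, hence has no embedded primes, so $\Ass R/(\underline{\fx}) = \Min R/(\underline{\fx})$ and in particular $\fp \in \Ass R/(\underline{\fx})$. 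Taking $\Lambda = \{\fp\}$ in Corollary \ref{t6}(i) concludes this part.

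For part (ii), write $h = \h \fp_1 = \h \fp_2$ and note that $\fp_1 \cap \fp_2$ is radical with $\h(\fp_1 \cap \fp_2) = \min(\h \fp_1, \h \fp_2) = h$, since each minimal prime of $\fp_1 \cap \fp_2$ is either $\fp_1$ or $\fp_2$. Again using the CM property, $\grad(\fp_1 \cap \fp_2) = h$, so I choose an $R$-regular sequence $\underline{\fx} = x_1,\dots,x_h$ lying in $\fp_1 \cap \fp_2$ (interpreting this as the empty sequence if $h = 0$, in which case $\fp_1,\fp_2$ are non-zero minimal primes and hence in $\Ass R$). Both $\fp_1$ and $\fp_2$ are primes of height $h$ containing $(\underline{\fx})$, so both are minimal over $(\underline{\fx})$. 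By the unmixedness of $R/(\underline{\fx})$, both $\fp_1, \fp_2 \in \Ass R/(\underline{\fx})$. Setting $\Lambda = \{\fp_1, \fp_2\}$, we have $\fp_1 \cap \fp_2 = \bigcap_{\fp \in \Lambda} \fp$, so Corollary \ref{t6}(i) applies.

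The only potential subtlety is the boundary case $h = 0$, which requires separate invocation of Corollary \ref{t6}(ii) (or the observation that in a CM ring minimal primes are associated). The geometric picture—picking a regular sequence of maximal length inside the ideal and exploiting unmixedness of the quotient—is the whole content of the proof, and the CM hypothesis is exactly what makes both ingredients available simultaneously.
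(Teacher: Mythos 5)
Your proof is correct and takes essentially the same route as the paper: choose an $R$-regular sequence $\underline{\fx}$ of length $\h \fp$ inside the ideal (available by Cohen--Macaulayness), show that $\fp$ (resp.\ $\fp_1$ and $\fp_2$) lies in $\Ass \frac{R}{(\underline{\fx})}$, and conclude via Corollary~\ref{t6}(i). The only cosmetic differences are that the paper deduces $\fp \in \Ass \frac{R}{(\underline{\fx})}$ from $\fp \subseteq Z_R(\frac{R}{(\underline{\fx})})$ together with a height comparison, rather than from minimality over $(\underline{\fx})$ and unmixedness of $\frac{R}{(\underline{\fx})}$, and that it does not single out the height-zero boundary case as you do.
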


\begin{proof}
\begin{itemize}
   \item [(i)]  Let $\fp \in \Spec R- \{0\}$ and $t:= \h \fp$. Then there exists an $R$-regular sequence $x_1,...,x_t$ in $\fp$ such that $\fp \subseteq Z_R (\frac{R}{(x_1,...,x_t)})$. Also, there exists $\fq \in \Ass_R(\frac{R}{(x_1,...,x_t)})$ such that $\fp \subseteq \fq$. Via $\h \fp = \h \fq$, $\fp = \fq$ and, by \ref{t6}, $\fp$ is a linked ideal.
   \item [(ii)]   Let $\fp_1,\fp_2 \in \Spec R$ such that $\fp_1 \cap \fp_2\neq 0$ and $t:=\h \fp_1 = \h \fp_2$. Then there exists an $R$-regular sequence $x_1,...,x_t \in \fp_1 \cap \fp_2$. Via the proof of (i), $\fp_1, \fp_2 \in \Ass_R (\frac{R}{(x_1,...,x_t)})$ and the assertion follows, again, from \ref{t6}.
  \end{itemize}

\end{proof}
As a corollary of the above proposition and \ref{t6}(i), one can characterize the radical linked ideals in a Cohen-Macaulay ring. Recall that the ideal $\fa$ is said to be relative Cohen-Macaulay with respect to $M$ if $H^i_{\fa}(M) = 0$ for all $i \neq \grad_M\fa.$ If $M=R$, for abbreviation, $\fa$ is called relative Cohen-Macaulay.

\begin{cor}\label{c3}
Let $R$ be a Cohen-Macaulay ring. Then
\begin{itemize}
   \item [(i)]  A radical ideal $\fa$ is a linked ideal in $R$ if and only if $\fa$ is unmixed.
   \item [(ii)]  The relative Cohen-Macaulay ideals are linked.
  \end{itemize}
\end{cor}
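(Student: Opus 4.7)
For part (i) I would leverage Corollary \ref{t6}(i). In the forward direction, if the radical ideal $\fa$ is linked, that corollary produces an $R$-regular sequence $\underline{\fx}$ and a subset $\Lambda \subseteq \Ass(R/\underline{\fx})$ with $\fa = \bigcap_{\fp \in \Lambda} \fp$; since $R$ is Cohen-Macaulay, every prime in $\Ass(R/\underline{\fx})$ has height equal to the length of $\underline{\fx}$, so all primes in $\Lambda$ share the same height and $\fa$ is unmixed. For the converse, assume $\fa$ is radical and unmixed of height $t$. Then $\fa = \bigcap_{\fp \in \Ass(R/\fa)} \fp$ with every such $\fp$ of height $t$, and because $R$ is Cohen-Macaulay we have $\grad \fa = t$, so there is an $R$-regular sequence $\underline{\fx} \subseteq \fa$ of length $t$. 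Each $\fp \in \Ass(R/\fa)$ contains $(\underline{\fx})$ and has $\h \fp = t = \h(\underline{\fx})$, hence is a minimal prime of $(\underline{\fx})$ and therefore an associated prime of $R/\underline{\fx}$. Corollary \ref{t6}(i), applied with $\Lambda = \Ass(R/\fa)$, yields the linkedness of $\fa$.

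For part (ii), the plan is to first show that a relative Cohen-Macaulay ideal $\fa$ of grade $t$ (equal to $\h \fa$ since $R$ is Cohen-Macaulay) is unmixed, with $\Ass(R/\fa)$ consisting only of primes of height $t$. This unmixedness should follow from the hypothesis $H^i_\fa(R) = 0$ for $i > t$ together with standard results relating $\Ass(R/\fa)$ to the attached primes of the top local cohomology module in the Cohen-Macaulay setting. After choosing a maximal $R$-regular sequence $\underline{\fx} \subseteq \fa$ of length $t$, I would argue that $\fa = (\underline{\fx}) :_R ((\underline{\fx}) :_R \fa)$ by analysing primary decompositions: both sides have all associated primes of height $t$ and lying in $\Ass(R/\underline{\fx})$, and the relative Cohen-Macaulay hypothesis prevents the appearance of extra primary components in the double colon.

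I expect the main obstacle to be precisely this colon identity in part (ii). Corollary \ref{t6}(i) applies only to radical ideals, but a relative Cohen-Macaulay ideal need not be radical (e.g.\ $(x,y)^2$ in $k[[x,y]]$). Consequently the reduction to the radical case used in (i) is no longer available, and one must either work directly with primary components at each height-$t$ associated prime or exploit a local duality argument to pin down $(\underline{\fx}) :_R ((\underline{\fx}) :_R \fa)$ from the vanishing of $H^i_\fa(R)$ for $i \neq t$.
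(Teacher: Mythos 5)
Your part (i) is correct and is precisely the route the paper intends: the paper gives no written proof beyond the remark that the corollary follows from Proposition \ref{t11} and Corollary \ref{t6}(i), and your two directions (all primes in $\Ass \frac{R}{(\underline{\fx})}$ have height equal to the length of $\underline{\fx}$ because $R$ is Cohen--Macaulay; conversely $\grad \fa = \h \fa$ supplies a regular sequence of the right length over which every member of $\Ass \frac{R}{\fa} = \Min \fa$ is minimal, hence associated) are exactly the details that remark leaves implicit.

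Part (ii) is where the trouble is, and it is not a repairable gap in your argument but a defect in the statement itself. The first step of your plan --- that $H^i_{\fa}(R)=0$ for $i\neq \grad \fa$ forces $\Ass \frac{R}{\fa}$ to be unmixed --- fails, because local cohomology depends only on $\sqrt{\fa}$ and therefore cannot detect embedded primes. Concretely, in $R=k[[x,y]]$ take $\fa=(x^2,xy)$: then $\sqrt{\fa}=(x)$, so $H^i_{\fa}(R)=H^i_{(x)}(R)=0$ for all $i\neq 1=\grad \fa$ and $\fa$ is relative Cohen--Macaulay; yet $\fa:_R x=(x,y)$, so the height-two prime $(x,y)$ is an embedded prime of $\fa$. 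This $\fa$ is not linked by any ideal generated by a regular sequence: length $0$ is impossible in a domain, length $2$ would force $\sqrt{\fa}=\fm$, and for length $1$ one has $\Ass \frac{R}{\fa}\subseteq \Ass \frac{R}{(f)}$, which consists of height-one primes only --- contradicting $(x,y)\in\Ass \frac{R}{\fa}$. So neither a primary-decomposition analysis nor local duality can establish the double-colon identity you aim for; the identity is simply false here. What the relative Cohen--Macaulay hypothesis does give (via $\cd(\fa,R)\geq \h \fp$ for every $\fp\in\Min \fa$, by localizing at $\fp$ and applying Grothendieck non-vanishing) is that all \emph{minimal} primes of $\fa$ have the same height, so that $\sqrt{\fa}$ is unmixed and hence linked by part (i). That weaker conclusion --- linkedness of $\sqrt{\fa}$ rather than of $\fa$ --- is the most that can be salvaged, and your instinct that (ii) was the obstacle was right; the obstacle is genuine and lies in the claim, not merely in your proof of it.
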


In theorem \ref{t10}, we will show that, in a certain case, part (i) of the above corollary characterize Cohen-Macaulay rings.

\begin{exam}\label{ex2}
Let $(R, \fm)$ be a Cohen-Macaulay complete local ring and $M$ be a finitely generated $R$-module. Then, by \ref{t11}(i), every non-zero prime ideal of $\Supp M$ is a linked ideal.  Also, by \cite[7.2.11(ii) and 7.3.2]{BS}, $$\sqrt{\Ann H^{\dim M}_{\fm}(M)} = \bigcap_{\fp \in\Att H^{\dim M}_{\fm}(M)}\fp =  \bigcap_{\fp \in\Assh M}\fp$$ is an unmixed ideal. Therefore, if $\Ann H^{\dim M}_{\fm}(M) \neq 0$ then, by \ref{c3}(i), $\sqrt{\Ann H^{\dim M}_{\fm}(M)}$ is a linked ideal.
\end{exam}

In spite of the proposition \ref{t11}, there are non-Cohen-Macaulay rings for which every prime ideal is linked.

\begin{exam}\label{ex1}
Let $R$ be a one dimensional ring with $\depth R = 0$. Then $\Spec R = \Max R \cup \Min R$ and, by \ref{t6}(ii) and (iii), every prime ideal of $R$ is linked.
\end{exam}

In the rest of this section, we classify regular, Gorenstein and Cohen-Macaulay rings in terms of their linked ideals.

\begin{thm}\label{t10}
Let $(R,\fm)$ be a local ring and $\Ass M = \Min \Ass M$. Then the following statements are equivalent.
\begin{itemize}
   \item [(i)]  $M$ is Cohen-Macaulay.
   \item [(ii)] $\frac{M}{\fa M}$ is an unmixed module for all ideals $\fa$ which are linked with respect to $M$.
  \end{itemize}

In particular, $R$, with $\Ass R = \Min R$, is Cohen-Macaulay if and only if the ideal $\fa$ is unmixed provided it is a linked ideal.
\end{thm}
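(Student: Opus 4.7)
The plan is to prove the equivalence by establishing each implication separately; the ``in particular'' clause then drops out by setting $M=R$ and invoking the observation, made right after Convention \ref{F2}, that $S_{(I;R)}$ coincides with the set of ideals linked by $I$.

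For (i) $\Rightarrow$ (ii), I would start with a Cohen--Macaulay $M$ and a link $\fa \sim_{(I;M)} \fb$ where $I$ is generated by an $M$-regular sequence of length $t$. Then $M/IM$ is Cohen--Macaulay of dimension $d-t$ (where $d = \dim M$), and in particular unmixed: every $\fq \in \Ass(M/IM)$ satisfies $\dim R/\fq = d-t$. Writing $\fb = (b_1,\ldots,b_s)$ gives $\fa M = IM :_M \fb = \bigcap_j (IM :_M b_j)$, producing an injection $M/\fa M \hookrightarrow \bigoplus_j M/(IM :_M b_j)$; each summand embeds in $M/IM$ via multiplication by $b_j$. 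Hence $\Ass(M/\fa M) \subseteq \Ass(M/IM)$, every associated prime of $M/\fa M$ has coheight $d-t$, and so $M/\fa M$ is unmixed.

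For (ii) $\Rightarrow$ (i), I would argue by contradiction. Suppose $r := \depth M < d := \dim M$. The case $r = 0$ places $\fm \in \Ass M$; combined with $\Ass M = \Min \Ass M$, and the fact that every prime of $R$ lies inside $\fm$, this forces $\Ass M = \{\fm\}$, so $\dim M = 0$, contradicting $r < d$. Hence $r \geq 1$; pick a maximal $M$-regular sequence $x_1,\ldots,x_r$ in $\fm$ and set $I = (x_1,\ldots,x_r)$. By maximality $\fm \in \Ass(M/IM)$, and $\dim(M/IM) = d-r > 0$ supplies some $\fp \in \Ass(M/IM)$ with $\fp \neq \fm$; after replacing $x_1$ by a higher power if necessary we may also arrange $\fp \neq I$ and $I \subsetneq \fm \cap \fp$. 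Lemma \ref{l13}(ii) and (iii) then place $\fa := \fm \cap \fp$ in $S_{(I;M)}$. In the ring case this immediately makes $\fa$ linked by $I$, and since $\Ass(R/\fa) = \{\fm, \fp\}$ contains primes of distinct coheights ($0$ and $\dim R/\fp \geq 1$), $R/\fa$ is not unmixed, contradicting (ii).

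The main obstacle will be lifting the reverse direction from the ring setting to an arbitrary module $M$: membership in $S_{(I;M)}$ does not by itself supply a linking partner $\fb$ with $\fb M = IM :_M \fa$, and even when $\fa$ is linked one needs unmixedness to fail for $M/\fa M$ rather than merely for $R/\fa$. To close both steps I would attempt to construct $\fb$ explicitly from the decomposition $\fa = \fm \cap \fp$, exploiting that $IM :_M (\fm \cap \fp)$ decomposes into its $\fm$- and $\fp$-parts inside $M/IM$; a judiciously chosen ideal should extend to exactly this submodule of $M$, and the same decomposition should force $M/\fa M$ to inherit both $\fm$ and $\fp$ as associated primes of different coheights.
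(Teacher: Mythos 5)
Your (i) $\Rightarrow$ (ii) is correct and is essentially the paper's argument: the paper cites \cite[2.7]{JS} for the containment $\Ass(M/\fa M)\subseteq \Ass(M/IM)$, which you instead verify directly via $\fa M=IM:_M\fb=\bigcap_j(IM:_M b_j)$ and the embeddings $M/(IM:_M b_j)\hookrightarrow M/IM$; either way the conclusion follows from the unmixedness of the Cohen--Macaulay module $M/IM$.

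The gap is exactly where you flagged it, and it is genuine. In (ii) $\Rightarrow$ (i) your construction produces $\fa=\fm\cap\fp\in S_{(I;M)}$, but for a general module $M$ this does not make $\fa$ \emph{linked with respect to $M$} in the sense of Definition \ref{F1}: you would need an ideal $\fb$ with $\fb M=IM:_M\fa$ and $\fa M=IM:_M\fb$, and the submodule $IM:_M\fa$ of $M$ need not be the extension of any ideal. Your proposed repair (decomposing $IM:_M(\fm\cap\fp)$ and extending ``a judiciously chosen ideal'') is not carried out, and there is no evident way to carry it out; moreover, even granting such a link you would still have to show that $M/\fa M$ --- not $R/\fa$ --- has associated primes of two different coheights. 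The paper sidesteps all of this with a different idea: by \cite[2.2]{JS} the ideal $(x_1,\dots,x_t)$ generated by an $M$-regular sequence is itself $M$-self-linked, so hypothesis (ii) applies directly to $M/(x_1,\dots,x_t)M$ and forces it to be unmixed; for every $\fp\in\Ass(M/(x_1,\dots,x_t)M)$ this gives $\dim R/\fp=\dim M-t$, and taking the sequence maximal (so that $\fm$ is such an associated prime) yields $0=\dim M-t$, i.e.\ $\depth M=\dim M$. Your argument does establish the ``in particular'' ring-theoretic statement, since for $M=R$ membership in $S_{(I;R)}$ is equivalent to being linked by $I$ (take $\fb=I:_R\fa$), but it does not prove the module-level equivalence that the theorem asserts.
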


\begin{proof}

   $"(i) \Rightarrow (ii)"$  Let $\fa$ be an ideal which is linked by the ideal $I$ generating by an $M$-regular sequence with respect to $M$ and let $\fp \in \Ass_R \frac{M}{\fa M}$. Then, by \cite[2.7]{JS}, $\fp \in \Ass_R \frac{M}{I M}$. Via Cohen-Macaulayness of $\frac{M}{IM}$, $\dim \frac{R}{\fp} = \dim \frac{M}{IM}$. On the other hand $\dim \frac{M}{IM} \geq \dim \frac{M}{\fa M}$. Putting together both of the estimates, the desired equality is shown to be true.

   $"(ii) \Rightarrow (i)" $ In the case where $\fm\in \Ass M,$ clearly, $M$ is Cohen-Macaulay. So, assume that $\depth M>0.$ Let $t\in \mathbb{N}$, $x_1,...,x_t$ be an $M$-regular sequence and $\fp \in \Ass_R (\frac{M}{(x_1,...,x_t)M})$. By \cite[2.2]{JS}, $(x_1,...,x_t)$ is an $M$-self linked ideal and so, by the assumption, $\frac{M}{(x_1,...,x_t)M}$ is unmixed. Therefore, $$\dim M - t \geq \dim M - \h_M\fp \geq \dim \frac{R}{\fp} = \dim \frac{M}{(x_1,...,x_t)M} = \dim M - t.$$ This implies that $\h_M\fp =t$ and, hence, $M$ is Cohen-Macaulay.

\end{proof}

In \cite[2.2]{DKH} a characterization of Gorenstein local rings is presented in terms of the "generically linked" ideals, provided $R$ is a Cohen-Macaulay ring.

In the following, we have a general characterization without the assumption that $R$ is Cohen-Macaulay.

\begin{thm}\label{t13}
Let $(R,\fm)$ be a local ring. Then the following are equivalent.
\begin{itemize}
   \item [(i)]  $R$ is Gorenstein.
   \item [(ii)] Any unmixed ideal $\fa$ is linked by every $R$-regular sequence $(x_1,...,x_t) \subset\fa$ of length $t=\grad \fa$ .
  \end{itemize}
\end{thm}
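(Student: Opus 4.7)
For (i) $\Rightarrow$ (ii), I would assume $R$ is Gorenstein and take an unmixed ideal $\fa$ with $\grad \fa = t$ and an $R$-regular sequence $I = (x_1, \ldots, x_t) \subseteq \fa$. The quotient $B := R/I$ is again Gorenstein (Gorensteinness is preserved modulo regular sequences). The unmixedness of $\fa$ combined with $\grad \fa = \h \fa = t$ in the now Cohen--Macaulay $R$ forces $\Ass R/\fa \subseteq \Min R/I$, so $R/\fa$ is a maximal Cohen--Macaulay $B$-module. For Gorenstein $B$, MCM modules are reflexive under $(-)^* := \Hom_B(-, B)$; applied to $M = R/\fa$, one finds $M^* = (I :_R \fa)/I$ and $M^{**} = (I :_R (I :_R \fa))/I$, so the biduality $M \cong M^{**}$ yields $\fa = I :_R (I :_R \fa)$, i.e., $\fa$ and $I :_R \fa$ are linked by $I$.

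For (ii) $\Rightarrow$ (i), I plan first to establish that $R$ is Cohen--Macaulay. Suppose not; set $t := \depth R < d := \dim R$ and choose a maximal $R$-regular sequence $I$ of length $t$, so that $A := R/I$ has $\depth A = 0$ and $\dim A \geq 1$. Let $\Gamma := H^0_{\fm_A}(A)$; since $\Gamma$ has finite length as an $A$-module, there is $N_0$ with $\fm_A^{N_0} \Gamma = 0$, which gives $0 :_A \fm_A^N = \Gamma$ for all $N \geq N_0$. Set $K := 0 :_A \Gamma$, which is $\fm_A$-primary because $\Gamma$ is supported only at $\fm_A$. Krull's intersection theorem ensures $K \neq \fm_A^N$ for infinitely many $N$ (otherwise the chain $\{\fm_A^N\}$ stabilizes, forcing $A$ to be Artinian). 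Choosing such $N \geq N_0$, the ideal $J := \fm_A^N$ is $\fm_A$-primary and satisfies $0 :_A (0 :_A J) = K \supsetneq J$. Then $\fa := I + \fm^N$ is $\fm$-primary (hence unmixed), has $\grad \fa = t$, contains $I$ as a regular sequence of length $\grad \fa$, and yet fails to be linked by $I$ (since $I :_R (I :_R \fa) \supsetneq \fa$), contradicting (ii).

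With $R$ now Cohen--Macaulay, I would let $I = (x_1, \ldots, x_d)$ be a maximal $R$-regular sequence and $A := R/I$ the resulting Artin local ring. If $R$ were not Gorenstein, the type $\dim_k(0 :_A \fm_A) \geq 2$; picking $k$-linearly independent socle elements $\bar s_1, \bar s_2 \in 0 :_A \fm_A$ and setting $\fa := I + (s_1)$ for $s_1 \in R$ lifting $\bar s_1$, the ideal $\fa$ is $\fm$-primary with $\grad \fa = d$ and contains $I$ as a regular sequence of length $d = \grad \fa$. Computing $(I :_R \fa)/I = \Ann_A \bar s_1 = \fm_A$ and $(I :_R (I :_R \fa))/I = 0 :_A \fm_A$, the preimage in $R$ is $I + \langle s_1, \ldots, s_r \rangle$, strictly larger than $\fa$ because $\bar s_2 \notin k \bar s_1$; this contradicts $\fa = I :_R (I :_R \fa)$, so $R$ must be Gorenstein. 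The principal obstacle is the Cohen--Macaulay step: establishing $K \neq \fm_A^N$ for some large $N$ (the key to producing the non-reflexive $\fm_A$-primary $J$) rests on both Krull's intersection theorem and the hypothesis $\dim A \geq 1$ to keep the $\fm_A$-adic filtration from stabilizing.
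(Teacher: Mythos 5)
Your proof of the direction (i) $\Rightarrow$ (ii) has a genuine gap. You claim that unmixedness of $\fa$ together with $\grad\fa=\h\fa=t$ forces $R/\fa$ to be a maximal Cohen--Macaulay module over $B=R/I$, and then invoke reflexivity of MCM modules over the Gorenstein ring $B$. But unmixed does not imply Cohen--Macaulay: in $R=k[[x,y,z,w]]$ the ideal $\fa=(x,y)\cap(z,w)$ is unmixed of grade $2$, yet $\depth R/\fa=1<2=\dim B$, so $R/\fa$ is not MCM over $B$ and the reflexivity argument is unavailable, even though the theorem asserts such an $\fa$ \emph{is} linked. There is also a smaller identification error: $M^{**}=\Hom_B(\Hom_B(B/\fa B,B),B)$ is the $B$-dual of the ideal $(I:_R\fa)/I$, which is not the same thing as its annihilator $(I:_R(I:_R\fa))/I$. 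What your argument actually needs is only injectivity of the biduality map $\sigma\colon M\to M^{**}$, whose kernel one computes to be exactly $(I:_R(I:_R\fa))/\fa$; so ``$M$ torsionless over $B$'' is literally equivalent to the linkage identity, and proving it for unmixed non-CM quotients is precisely the content of the classical Peskine--Szpiro statement. The standard route is through local duality, using $\operatorname{Ext}^t_R(R/\fa,R)\cong(I:_R\fa)/I$ and the fact that this module has annihilator $\fa$ when $\fa$ is unmixed; the paper sidesteps this by reducing to grade $0$ and citing \cite[4.1]{JS}.

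Your proof of (ii) $\Rightarrow$ (i) is correct and takes a genuinely different route from the paper's. The paper argues by induction on $\dim R$: the base case is the double-annihilator characterization of Artinian Gorenstein rings, and the inductive step first rules out $\fm\in\Ass R$ (via $0:_R0:_R\fm^j=\fm^j$) and then passes to $R/xR$ for a nonzerodivisor $x$. You instead first force Cohen--Macaulayness directly, by observing that if $\depth R<\dim R$ then for a maximal regular sequence $I$ and suitable $N$ the $\fm$-primary (hence unmixed) ideal $I+\fm^N$ fails to be linked by $I$ (your Nakayama/Krull argument that $0:_A(0:_A\fm_A^N)\supsetneq\fm_A^N$ for some large $N$ is sound), and then detect type $\ge 2$ in the Artinian reduction using two independent socle elements. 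Your approach avoids the induction and localizes the whole obstruction in $\fm$-primary ideals, at the cost of two separate contradiction constructions; either argument would do for this implication.
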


\begin{proof}

   $"(i) \Rightarrow (ii)"$  Using the fact that $\frac{R}{(x_1,...,x_t)}$ is Gorenstein for every $R$-regular sequence $x_1,...,x_t$, one may assume that $\fa$ is an unmixed non-zero ideal of grade $0$. Now, the assertion follows from \cite[4.1]{JS}.

   $"(ii) \Rightarrow (i)"$ We proceed by induction on $d:= \dim R$. Let $d=0$. Then every non-zero ideal $\fa$ of $R$ is unmixed of zero grade and, by the assumption, $0:_R0:_R\fa = \fa$. Therefore, in view of \cite[3.2.15]{BH}, $R$ is Gorenstein.

    Now assume that $d>0$ and the assertion has been proved for all local ring of dimension $<d$. We claim that $\depth R>0$. Assume to the contrary that $\fm \in \Ass R$. Then, by the assumption, $\fm^j$ is linked by the zero ideal and $0:_R0:_R\fm^j = \fm^j$ for all $j\in\mathbb{N}.$ On the other hand, there is $i \in \mathbb{N}$ such that $0:_R\fm^i = 0:_R\fm^{i+1}$. This implies that $\fm^i=0$ and $d=0$, which is a contradiction.

     Now, let $x \in \fm - Z(R)$, $\overline{\fa}$ be an unmixed ideal of grade $l$ and $\overline{y_1},..., \overline{y_l}$ be an arbitrary $\overline{R}$-regular sequence in $\overline{\fa}$ such that $\overline{\fa} \neq (\overline{y_1},..., \overline{y_l})$, where $-: R \rightarrow \frac{R}{Rx}$ is the natural homomorphism. Then, by \cite[Exersice 6.7]{M}, $\fa$ is an unmixed ideal of grade $l+1$ and, by the assumption, $$(x,y_1,...,y_l):_R(x,y_1,...,y_l):_R\fa = \fa.$$ In other words, $$(\overline{y_1},..., \overline{y_l}):_{\overline{R}}(\overline{y_1},..., \overline{y_l}):_{\overline{R}}\overline{\fa} = \overline{\fa}.$$ This means that $\overline{\fa}$ is a linked ideal by $(\overline{y_1},..., \overline{y_l})$. Now, using the inductive hypothesis, $\overline{R}$, and so $R,$ is Gorenstein.

\end{proof}

As another consequence of \ref{l13}, one can characterize the regular local rings, too.

\begin{thm}\label{t12}
 A local ring $(R,\fm)$ is regular if and only if there exists a maximal $R$-regular sequence $x_1,...,x_t$ such that $\fm$ is not linked by $(x_1,...,x_t)$.
\end{thm}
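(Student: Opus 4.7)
The plan is to translate the condition ``$\fm$ is not linked by $I=(x_1,\dots,x_t)$'' into the algebraic equality $\fm=I$ via the $S_{(I;R)}$ framework developed in Lemma \ref{l13}, and then to read off regularity from this equality by elementary dimension theory.

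For the forward direction, assume $R$ is regular of dimension $d$ and take a regular system of parameters $x_1,\dots,x_d$, so that $\fm=(x_1,\dots,x_d)$. Since regular local rings are Cohen--Macaulay with $\depth R=d$, this sequence is a maximal $R$-regular sequence. Setting $I=(x_1,\dots,x_d)=\fm$, the defining strict inclusion $I\subsetneqq \fa$ of Convention \ref{F2} fails for $\fa=\fm$, so $\fm\notin S_{(I;R)}$, and in particular $\fm$ is not linked by $I$.

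For the converse, suppose $x_1,\dots,x_t$ is a maximal $R$-regular sequence with $I=(x_1,\dots,x_t)$ and that $\fm$ is not linked by $I$. Maximality of the sequence yields $\fm\in \Ass(R/I)$ and $t=\depth R$. If one had $\fm\neq I$, Lemma \ref{l13}(ii) would place $\fm$ in $S_{(I;R)}$; setting $\fb:=I:_R\fm$, the identity $\fm=I:_R I:_R \fm=I:_R \fb$ together with $\fb=I:_R \fm$ gives candidate equations for a link. The ideal $\fb$ is proper, because $\fb=R$ would force $\fm\subseteq I$ and hence $\fm=I$, contradicting $I\subsetneqq \fm$. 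This produces an honest link $\fm\sim_{(I;R)}\fb$, contradicting the hypothesis. Therefore $\fm=I=(x_1,\dots,x_t)$, so $\fm$ is generated by $t=\depth R$ elements; Krull's height theorem gives $\dim R\le t$, while $\depth R\le \dim R$ gives the reverse inequality, so $\dim R=t$. Hence $\fm$ is minimally generated by $\dim R$ elements and $R$ is regular.

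The one delicate point is the promotion of membership in $S_{(I;R)}$ to an actual link, which reduces to checking that $I:_R\fm$ is a proper ideal; this is essentially forced by the strict inclusion $I\subsetneqq \fm$. Everything else is bookkeeping with Lemma \ref{l13} together with standard dimension theory for local rings.
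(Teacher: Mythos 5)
Your proof is correct and follows essentially the same route as the paper: in both arguments the forward direction uses that a regular system of parameters is a maximal regular sequence generating $\fm$ (so $\fm$ cannot strictly contain it), and the converse uses $\fm\in\Ass(R/(x_1,\dots,x_t))$ together with Lemma \ref{l13}(ii) to force $\fm=(x_1,\dots,x_t)$ and conclude regularity. Your extra checks --- that $I:_R\fm$ is proper so that membership in $S_{(I;R)}$ really yields a link, and the Krull height argument at the end --- are details the paper leaves implicit, not a different method.
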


\begin{proof}
Let $R$ be a regular local ring and set $t : = \dim R$. Then, there exists an $R$-regular sequence $x_1,...,x_t$ such that $\fm = (x_1,...,x_t)$. Therefore, $\fm$ is not linked by $(x_1,...,x_t)$.

Now, assume that there exists a maximal $R$-regular sequence $x_1,...,x_t$ such that $\fm$ is not linked by $(x_1,...,x_t)$. As $\fm \in \Ass\frac{R}{(x_1,...,x_t)}$, by \ref{l13} (i), $\fm = (x_1,...,x_t)$. Therefore, $R$ is a regular ring.

\end{proof}


\bibliographystyle{amsplain}

\end{document}